\newtheorem{definition}{Definition}
\newtheorem{thm}{Theorem}
\theoremstyle{thm}
\newcommand{\cfeldesy}{\affiliation{Center for
		Free-Electron Laser Science CFEL, Deutsches	Elektronen-Synchrotron 
		DESY, Notkestraße 85, 22607 Hamburg, Germany}}%
\newcommand{\uhhcui}{\affiliation{Center for Ultrafast Imaging, Universität 
		Hamburg, Luruper
		Chaussee 149, 22761 Hamburg, Germany}}%
\newcommand{\uhhphys}{\affiliation{Department of Physics, Universität 
		Hamburg, Luruper Chaussee 149,
		22761 Hamburg, Germany}}%
\newcommand{\uhhmaths}{\affiliation{Department of Mathematics, 
		Universität Hamburg, Bundesstraße 55,
		20146, Hamburg, Germany}}%
\newcommand{\ysemail}{\email[Email: ]{yahya.saleh@cfel.de}}%
\newcommand{\cmiweb}{\homepage[URL:~]{https://www.controlled-molecule-imaging.org}}%
\begin{document}
\title{Augmenting Basis Sets by Normalizing Flows}

	\author{Yahya Saleh}\ysemail \cmiweb \uhhmaths\cfeldesy %
\author{Armin Iske}\uhhmaths%
\author{Andrey Yachmenev}\cfeldesy\uhhcui%
\author{Jochen Küpper}\cfeldesy\uhhcui\uhhphys%
\begin{abstract}
   Approximating functions by a linear span of truncated basis sets is a standard procedure for the
   numerical solution of differential and integral equations. Commonly used concepts of
   approximation methods are well-posed and convergent, by provable approximation orders. On the
   down side, however, these methods often suffer from the curse of dimensionality, which limits
   their approximation behavior, especially in situations of highly oscillatory target functions.
   Nonlinear approximation methods, such as neural networks, were shown to be very efficient in
   approximating high-dimensional functions.
   We investigate nonlinear approximation methods that are constructed by composing standard basis
   sets with normalizing flows. Such models yield richer approximation spaces while maintaining the
   density properties of the initial basis set, as we show. Simulations to approximate
   eigenfunctions of a perturbed quantum harmonic oscillator indicate convergence with respect to
   the size of the basis set.
\end{abstract}
	\maketitle

\section{Introduction}
Numerical approximation schemes arise in many relevant problems that deal 
with solutions of
(partial) differential and integral equations~\cite{Iske:ApproxTh:2018}, where commonly used
approximation methods rely on fix point iterations. Another popular class of approximation methods
expands the sought target function by a linear span from a (countable) set of basis functions. In
fact, when applied to the solution differential equations, this standard approach leads to spectral
methods.

Spectral methods were studied quite
extensively~\cite{Gottlieb:SpectralMethods:1977}, where they enjoyed
increasing popularity in various applications from computational science and
engineering, especially for their good approximation properties.
Despite their favorable approximation properties, their convergence rate
degrades exponentially at increasing problem dimension. This phenomenon,
referred to as {\em the curse of dimensionality}, is due to the linearity of the
approximation method.
This leads to severe limitations, \eg,  in applications of quantum mechanics and
dynamics, where the systems of interest are inherently high-dimensional and
where approximations of large numbers of eigenfunctions to unbounded linear
operators are required.

Nonlinear models, such as neural networks, have recently been explored as alternative approximation methods for solving (partial) differential equations~\cite{E:CA55:369}.
Their efficiency in approximating high-dimensional functions for challenging applications, ranging from image recognition to natural language processing, shows their potential for solving high-dimensional differential equations, in particular for variational problems.
One important problem class are infinite dimensional eigenvalue problems,
\eg,  Schrödinger equations, since they are strongly related to variational
simulations of numerous physics phenomena and, moreover, they often
demand solutions for a large number of eigenvalues.

Indeed, neural networks were successfully applied to various finite-~\cite{Carleo:Science355:602} and infinite-dimensional~\cite{Hermann:Natchem12:891} quantum systems,
yielding higher accuracies at a lower computational scaling, in contrast to traditional methods.
Using standard neural network architectures, such as multilayer perceptrons, however, reduces their reliability, due to the lack of analytic convergence results.
In addition, their concepts are widely restricted to modeling eigenfunctions
corresponding to states with low energies.

In the physics applications that we have in mind, we are primarily interested in nonlinear
approximation schemes, where standard basis sets are composed with {\em normalizing
   flows}~\cite{Cranmer:arXiv1904:05903}, \ie, invertible parameterizable functions \footnote{In the
   original paper~\cite{Cranmer:arXiv1904:05903} such approximating schemes were proposed to solve
   Schrödinger equations, where they were referred to as \emph{quantum flows}. We recognize the
   applicability of such models in approximation problems unrelated to differential equations or
   quantum mechanics, and therefore refrain from using this terminology.
   Instead, we propose to refer to
   such models as \emph{augmented basis sets} since normalizing flows increase/augment the
   expressivity of standard basis sets.}.
The aim of such a model is to increase the expressivity of standard basis sets, thereby enabling a more diverse and expansive approximation space.
We restrict ourselves to the relevant case of basis sets of $L^2$.
We show that augmented basis sets, \ie, standard $L^2$ basis sets composed
with invertible parametrizable mappings,
lead to a family of linear spaces that are all dense in $L^2$, whereby they generate much richer approximation spaces.

We use invertible residual neural networks~\cite{Behrmann:ICML2019:573} to augment the expressivity of Hermite functions,
before we apply them to find the eigenfunctions of a perturbed quantum harmonic oscillator.
Our results show fast convergence with respect to the size of the augmented basis. Moreover, we
argue that the inductive bias attained by the use of an initial basis set allows for
the approximation
of a larger number of eigenfunctions than what is possible with more flexible
models, such as standard neural networks.

\section{Augmenting Basis Sets}
For an open domain $\Omega \subseteq \mathbb{R}^d$, let $L^2(\Omega)$ be
the linear space of square integrable functions from $\Omega$ to
$\mathbb{R}^d$.
From now, we drop the dependence on $\Omega$ for notational simplicity.
Moreover, let $\{\phi_n\}_n$ denote a basis set of $L^2$ with inner product $\langle., . \rangle$.
Let $g_\theta: \Omega \longrightarrow \Omega$ be a smooth
parametrizable bijection.
Finally, let '$\circ$' denote the standard composition between functions.

\begin{definition}[Augmented set of functions]\label{def:asf}
On the above assumptions, we define an augmented set of functions $\{ \phi^A_n \}_n$ by
$$
    \phi_n^A(x) := (\underbrace{\phi_n \circ
    g^{-1}_\theta}_{:=\tilde{\phi}_n})(x) |\det \nabla_x g^{-1}_\theta|^{1/2}.
$$
For simplicity, we drop the dependence of $g$ on $\theta$,
and we define the weighted $L^2_{g ^{-1}}$ space induced by the bijection $g$ as
\begin{equation*}
	L^2 _{g^{-1}} := \left\{
	f  \, \text{ : } \, \int |f|^2 |\det \nabla_x
	g^{-1}| dx
	=
		\int |f|^2
	dg^{-1} < \infty \right\},
\end{equation*}
with the usual equivalence relation in $L^p$-spaces. We denote by $\langle ., .
\rangle_{g^{-1}}$
the inner product on the weighted space $L^2_{g ^{-1}}$, \ie,
\begin{equation*}
   \langle f, h
   \rangle_{g^{-1}} = \int f(x) h(x) |\det \nabla_x g^{-1}| dx.
\end{equation*}
\end{definition}

The following theorem shows that any augmented set of functions, as in Definition~\ref{def:asf}, is a basis set.
This observation enables us to construct approximation spaces with good convergence properties.
\begin{thm}[Augmented basis sets]
   \label{lem:basis}
   The augmented set of functions $\{\tilde{\phi}_n\}_n$ is an orthonormal basis of $L^2_{g^{-1}}$.
\end{thm}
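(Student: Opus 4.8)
The plan is to exhibit $\{\tilde\phi_n\}_n$ as the image of the orthonormal basis $\{\phi_n\}_n$ of $L^2$ under a \emph{unitary} operator onto $L^2_{g^{-1}}$; since a unitary carries any orthonormal basis to an orthonormal basis, the theorem follows at once. Concretely, I would introduce the composition (pullback) operator
\[
   U \colon L^2 \longrightarrow L^2_{g^{-1}}, \qquad U\psi := \psi \circ g^{-1}\ (=\tilde\psi),
\]
and prove that it is a well-defined surjective isometry.

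The isometry statement is a single change of variables. For $\psi,\chi \in L^2$ one writes
\[
   \langle U\psi, U\chi \rangle_{g^{-1}} = \int \psi(g^{-1}(x))\,\chi(g^{-1}(x))\,|\det\nabla_x g^{-1}(x)|\,dx
\]
and substitutes $x = g(y)$, $dx = |\det\nabla_y g(y)|\,dy$. By the inverse function theorem $|\det\nabla_x g^{-1}(x)| = |\det\nabla_y g(y)|^{-1}$ at $x = g(y)$, so the two Jacobian factors cancel and the right-hand side collapses to $\int \psi(y)\chi(y)\,dy = \langle \psi,\chi\rangle$. Taking $\psi = \phi_n$, $\chi = \phi_m$ this already gives $\langle \tilde\phi_n,\tilde\phi_m\rangle_{g^{-1}} = \delta_{nm}$, i.e. orthonormality of the augmented system; taking $\chi = \psi$ it gives $\|U\psi\|_{g^{-1}} = \|\psi\|$, so $U$ is isometric, hence injective with closed range.

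Surjectivity is the same substitution run backwards: given $f \in L^2_{g^{-1}}$, set $\psi := f \circ g$; the change of variables shows $\int |\psi(y)|^2\,dy = \int |f(x)|^2 |\det\nabla_x g^{-1}|\,dx < \infty$, so $\psi \in L^2$, and $U\psi = f$ since $g$ is a bijection. Thus $U$ is onto, therefore unitary, and $\{\tilde\phi_n\}_n = \{U\phi_n\}_n$ is an orthonormal basis of $L^2_{g^{-1}}$ (in passing, this also shows $L^2_{g^{-1}}$ is a Hilbert space). Equivalently, and more hands-on, one can skip surjectivity and verify completeness directly: if $\langle f, \tilde\phi_n\rangle_{g^{-1}} = 0$ for all $n$, then $\langle f\circ g, \phi_n\rangle = 0$ for all $n$ by the reverse change of variables, whence $f\circ g = 0$ a.e. and so $f = 0$ a.e.

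The only point that needs genuine care — the ``hard part,'' such as it is — is the legitimacy of the change-of-variables step, which is precisely why the hypothesis asks for a \emph{smooth} bijection: one reads this as $g$ being a $C^1$-diffeomorphism (smooth with everywhere-nonsingular Jacobian, so that $g^{-1}$ is $C^1$ by the inverse function theorem), under which $|\det\nabla_x g^{-1}|$ and $|\det\nabla_y g|$ are continuous, everywhere-positive weights and the classical change-of-variables formula for the Lebesgue integral applies as stated. One also uses that a diffeomorphism maps Lebesgue-null sets to null sets, so that the $L^2$ and $L^2_{g^{-1}}$ equivalence classes correspond under $U$. With those observations in place, the remainder is bookkeeping.
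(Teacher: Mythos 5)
Your proposal is correct, and your primary route differs in packaging from the paper's. The paper argues directly: orthonormality by the same change of variables you use, and completeness by the ``only the zero function is orthogonal to every $\tilde\phi_n$'' criterion --- given $f\in L^2_{g^{-1}}$ with $\langle \tilde\phi_n, f\rangle_{g^{-1}}=0$ for all $n$, the substitution gives $\int \phi_n(y)\,f(g(y))\,dy=0$ for all $n$, whence $f\circ g\equiv 0$ and then $f\equiv 0$ because $g$ is a bijection; this is exactly the ``hands-on'' alternative you sketch at the end. Your main argument instead promotes the change of variables to a statement about the pullback operator $U\psi=\psi\circ g^{-1}$ being unitary from $L^2$ onto $L^2_{g^{-1}}$ and transports the basis. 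What this buys: completeness comes for free from surjectivity rather than from a separate orthogonality test; you get, in passing, that $L^2_{g^{-1}}$ is a Hilbert space; and you automatically cover a point the paper's proof uses tacitly, namely that $f\in L^2_{g^{-1}}$ implies $f\circ g\in L^2$, which is needed before one may invoke completeness of $\{\phi_n\}_n$ in $L^2$ to conclude $f\circ g=0$. You are also more careful at the final step: the paper argues pointwise via ranges ($\mathrm{range}(f\circ g)=\mathrm{range}(f)$), while you phrase it correctly in the almost-everywhere sense, using that a diffeomorphism preserves null sets. What the paper's route buys is brevity: a two-line verification with no auxiliary operator, which is all that is needed for the statement as claimed. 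Your explicit reading of ``smooth parametrizable bijection'' as a $C^1$-diffeomorphism, so that the classical change-of-variables formula applies and the weight $|\det\nabla_x g^{-1}|$ is well defined and positive, is consistent with the paper's setting (invertible residual networks) and is a hypothesis worth stating, as you do.
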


\begin{proof}
   The orthonormality of the functions in $\{\tilde{\phi}_n\}_n$ can readily be seen by a simple change of variable.
   To prove that $\{\tilde{\phi}_n\}_n$ is a basis set for $L^2_{g^{-1}}$, we take one $f \in L^2_{g^{-1}}$ satisfying
   $f \perp \tilde{\phi}_n$ for all $n$. In this case, we have the orthogonality
   \begin{equation*}
      0 = \langle \tilde{\phi}_n, f \rangle_{g^{-1}} = \int \phi_n(y) f(g(y)) dy
      \qquad \mbox{ for all } n.
   \end{equation*}
   Since $\{\phi_n\}_n$ is a basis of $L^2$, we have  $f\circ g\equiv 0$ , which is
   equivalent to $\text{ran} (f \circ g) = \{0\}$. Given that $g^{-1}$ is invertible we have
   $\text{range}(g) = \text{domain}(g^{-1}) =\Omega$. Hence, $\text{range}(f
   \circ g) =\text{range}(f ) $ and
   thus $f \equiv 0$.
\end{proof}

Note that $\{\tilde{\phi}_n\}_n$ being a basis of $L^2_{g^{-1}}$ is equivalent to $\{\phi^A_n\}_n$ being a basis of $L^2$.
Also note that $\overline{\text{span} \{\phi_n^A\}}$ depends on $\theta$.

In our numerical investigations, we work with the set of Hermite functions $\{\phi_n\}_n$.
In one dimension these are defined by
\begin{equation*}
   \phi_n (x) := h_n(x) \exp(-x^2/2),
\end{equation*}
where $h_n$ denotes the $n$th Hermite polynomial. Note that this set of functions is a basis of $L^2(\mathbb{R}) $.

Now there are many ways to construct a parametrizable bijection $g$.
As these bijections emerged in the machine learning community to perform
generative modeling, the most common framework for constructing such
bijections
is \emph{via} special types of neural networks. Here we use
invertible residual networks~\cite{Behrmann:ICML2019:573} having the form
$g(x) = x + k(x)$, where $k$ is a standard neural network with linear
layers and nonlinear
activations. It was shown that this model is invertible, if
$\text{Lip}(k)<1$~\cite{Behrmann:ICML2019:573}, where $\text{Lip}(k)$ is
the Lipschitz constant of $k$. To satisfy the invertibility condition for a
residual network $k$,
we employed Lipschitz-continuous  nonlinear activation functions and divided the linear layer weight matrices
by their spectral norm.

In the next section, we show how to numerically solve the Schrödinger equation for a perturbed quantum harmonic oscillator,
in order to demonstrate the advantages of approximating $L^2$ functions with the augmented Hermite basis.

\section{Numerical Investigation and Discussion}

Let $\Omega := \mathbb{R}^d$.
We consider approximating the eigenvalues of quantum mechanical Hamiltonian operators
 $H: D(H) \longrightarrow L^2$, \ie, we aim at finding the eigenpairs $(E_k, \psi_k)$ that
solve the Schrödinger equation
\begin{equation}
   \label{eq:Schroedinger}
   H \psi_k
   =
   E_k \psi_k,
   \quad \mbox{ where } \int |\psi_k|^2 dx = 1
   \qquad \mbox{ for } k=0, 1, \ldots.
\end{equation}
To this end, we use a Bubnov-Galerkin numerical scheme
\cite{Gottlieb:SpectralMethods:1977}.
We assume that $H$ is a self-adjoint operator, where we consider the generic case of $H= T+V$
with $T=-\frac{1}{2} \Delta$ denoting the kinetic-energy operator and $V$
denotes the scalar potential-energy function, \ie, $(Vf)(x) = V(x)f(x)$.
Following standard theories of quantum mechanics \cite{Lubich:QCMD}, we set $D(H) = \mathcal{H} ^2$ for the Sobolev space of functions,
which are  square integrable along with their generalized partial derivatives up to second order.

We discretize problem~(\ref{eq:Schroedinger}) by using a truncated series of
Hermite functions
$\{\phi_n\}_{n=0}^{N-1}$ and a truncated series of augmented Hermite functions
$\{\phi_n^A\}_{n=0}^{N-1}$, with the aim to compare the convergence
properties of the two schemes as $N$ increases.
Projecting
\eqref{eq:Schroedinger} on the linear space of the two basis sets yields
\begin{equation}
   \label{eq:Schroedinger_weak}
   \tilde{H} C_{n} = \tilde{E_{n}} C_n
   \qquad \mbox{ for } n=0, \dots, N-1,
\end{equation}
where $\tilde{H}$ is an $N \times N$ matrix whose $(i,j)$-th entry is
$\tilde{H}[i,j] = \langle \phi_i , H \phi_j \rangle$ or $\langle \phi_i ^A, H \phi_j^A \rangle$, for Hermite and augmented Hermite
basis sets, respectively.
Moreover, $C_n$ in~(\ref{eq:Schroedinger_weak}) is a vector of size $N$.
Hence, solving~\eqref{eq:Schroedinger} boils down to solving the finite dimensional eigenvalue problem
\eqref{eq:Schroedinger_weak}. Note that the approximation space in the case of using Hermite
functions reads $V = \text{span}(\phi_0, \dots, \phi_{N-1})$. In the case of augmented Hermite functions,
the approximation space $V^A$ depends on parameter $\theta$, where
$$
     V^A = \{\bigcup\limits_{\theta} \ \text{span}(\phi^A_0, \dots, \phi^A_{N-1}) \}
$$
yields a richer approximation space.

To solve problem \eqref{eq:Schroedinger_weak}, we used a direct eigensolver to find the  eigenvector coefficients and eigenvalues
$C_n, \tilde{E}_n$, for $n=0, \dots , N-1$. For augmented Hermite bases, the
resulting eigenvalues $\tilde{E}_n$ and eigenvectors $C_n$ depend on the parameters $\theta$ of the normalizing flow.
Noting that
$$
   \sum_{n=0}^{N-1} \tilde{E}_n \geq \sum_{n=0}^{N-1} E_n,
$$
we used a first-order iterative
optimization algorithm to optimize the parameters $\theta$ by minimizing the loss function
$$
     \mathcal{L} = \sum_{n=0}^{N-1} \tilde{E}_n.
$$
For more details on the training procedure, we refer to \autoref{sec:supplementary}.

We fixed $d=1$ and solved \eqref{eq:Schroedinger} with the anharmonic
potential $V(x) =
\frac{1}{2}x^2 + \frac{1}{4}x^4$. We
 studied the convergence of both approximation schemes as a function of the
truncation parameter, $N$. We considered values of $N$ ranging
from $1$ to $49$. To compute the elements of $\tilde{H}$ we
used Gauss-Hermite quadratures see
\autoref{sec:supplementary}.
\autoref{fig:convergence} shows five bands, each of which
represents the sum of five eigenvalues in ascending order.
The graph clearly shows  that both Hermite and augmented Hermite basis functions converge
as a function of $N$, with the latter achieving faster convergence.
Furthermore, because a Bubnov-Galerkin numerical scheme for a Schrödinger
equation based on Hermite functions
converges to the true eigenvalues of \eqref{eq:Schroedinger}, we conclude that a numerical scheme based on augmented Hermite functions converges to the true solutions.
Finally, our proposed numerical scheme enables us to compute many
eigenvalues at once.
We remark that this cannot be achieved by applying straight forward standard neural networks.
We attribute this to the good inductive bias provided by the use of an $L^2$
basis, which constitutes of the quantum harmonic oscillator eigenfunctions.

\autoref{fig:loss_function} shows the convergence of the total loss function, \ie,
the sum of all eigenvalues  for $N = 5,6,7,8,9$ at each training iteration $t$, plotted for
two discretization schemes.
We can see that the loss function
converges faster for smaller $N$ implying that a larger number of
training iterations are required to model a larger number of excited states.
The loss function converges rapidly for low-lying states with only few iterations necessary
to optimize the nonlinear parameters $\theta$. This
demonstrates the high quality of the inductive bias
provided by Hermite functions.

Finally, we analyze the convergence rate of both discretization schemes numerically.
To this end, we use the concept of {\em Q-convergence}, see~\cite{Sun:OptimizationTh:2006} for details:
a sequence $(x_N)_N$ converging to a limit $x^*$ is said to converge {\em
Q-super-linearly}
to $x^*$, iff
\begin{equation*}
   e_N :=
   \frac{|x_N -
         x^*|}{|x_{N-1}-x^*|} \longrightarrow 0
         \quad \mbox{ for } N \to \infty.
\end{equation*}
To define the sequence $(e_N)_N$ for our schemes, we use  the second band of
eigenvalues (states from 5 to 10) for both discretization schemes, \ie,
$$
    x_N = \sum_{n=5}^{10} \tilde{E}_{n,N}.
$$
The sequence convergence limit is defined as
$$
    x^* = \sum_{n=5}^{10} E^*_{n},
$$
where the reference converged eigenvalues $E^*_n$ were computed using a large basis set with $N=29$.
The reference eigenvalues computed with Hermite and augmented Hermite basis sets differ slightly
(with $3 \times 10^{-3}$ at maximum)
which can be attributed to the properties of the stochastic nonlinear optimizer. Therefore,
we choose to define $E^*_n$ based on the converged eigenvalues computed by the respective
discretization scheme.

We computed
the quantity $e_N$ for different values of $N$. Since the resulting sequence
values
$\{e_N\}_{N=10}^{N=29}$ are noisy, we fitted a linear regressor to them and
plotted the resulting function
for the two discretization schemes in \autoref{fig:convergence_rate}.
Both discretization schemes result in Q-super-linear convergence,
with augmented Hermite converging faster.
We observed that the convergence of augmented Hermite functions worsens as the accuracy increases,
whereas this behavior is absent when discretizing with Hermite functions.
We attributed this to the iterative nonlinear optimization performed for augmented Hermite functions,\
which resulted in the oscillating behavior of the loss function around its
extremal points.

For the same size of the linear expansion $N$, the computational costs for
performing the numerical simulation above are,
indeed, higher when using augmented Hermite functions. This is due to the
need for training the normalizing flow and having to compute the trace of the
projected Hamiltonian at each step. However, clearly a smaller $N$ is required 
to converge a certain number of exited states when using augmented Hermite 
functions than when using Hermite functions.
\begin{figure}
      \includegraphics[width=0.5\linewidth]{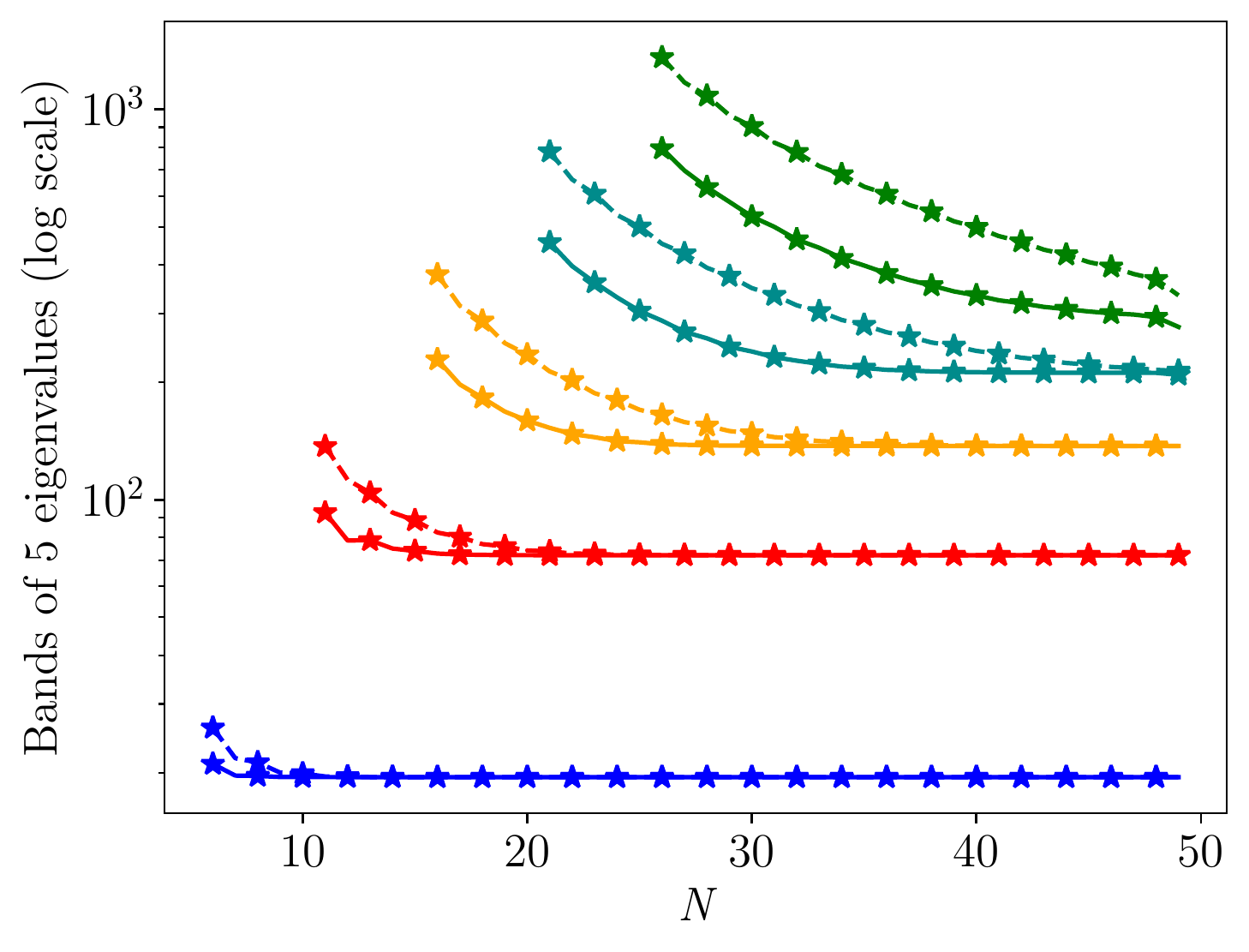}%
      \caption{The average error over bands of five approximate eigenvalues is
      depicted as a function of the basis truncation parameter $N$.
      The results are plotted for Hermite (dotted lines) and augmented Hermite (solid lines) discretizations schemes.
      The color of the lines indicates different energy bands.
      To improve the contrast between the lines, every other marker is omitted. }
\label{fig:convergence}
\end{figure}
\begin{figure}
      \includegraphics[width=0.5\linewidth]{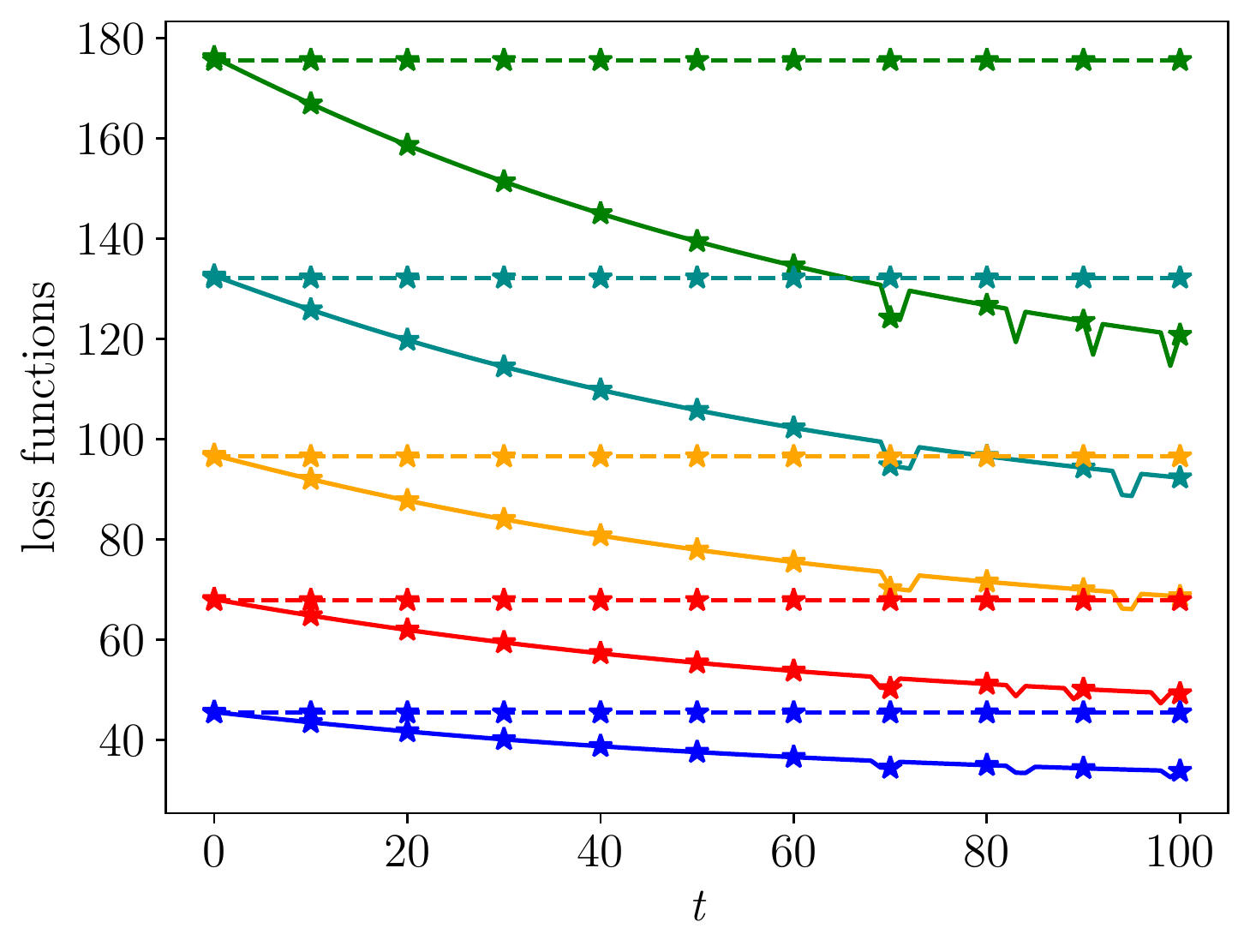}%
      \caption{The loss as function of the number of training iterations for
      $N=5,6,7,8,9$ (plotted with blue, red, orange, teal, and green).
      Dotted and solid lines correspond to
      approximate eigenvalues computed using Hermite and augmented Hermite discretization schemes, respectively.
      To improve the contrast between the lines, every 5\emph{th} marker is
      omitted.}
      \label{fig:loss_function}
\end{figure}

\begin{figure}
 \includegraphics[width=85mm]{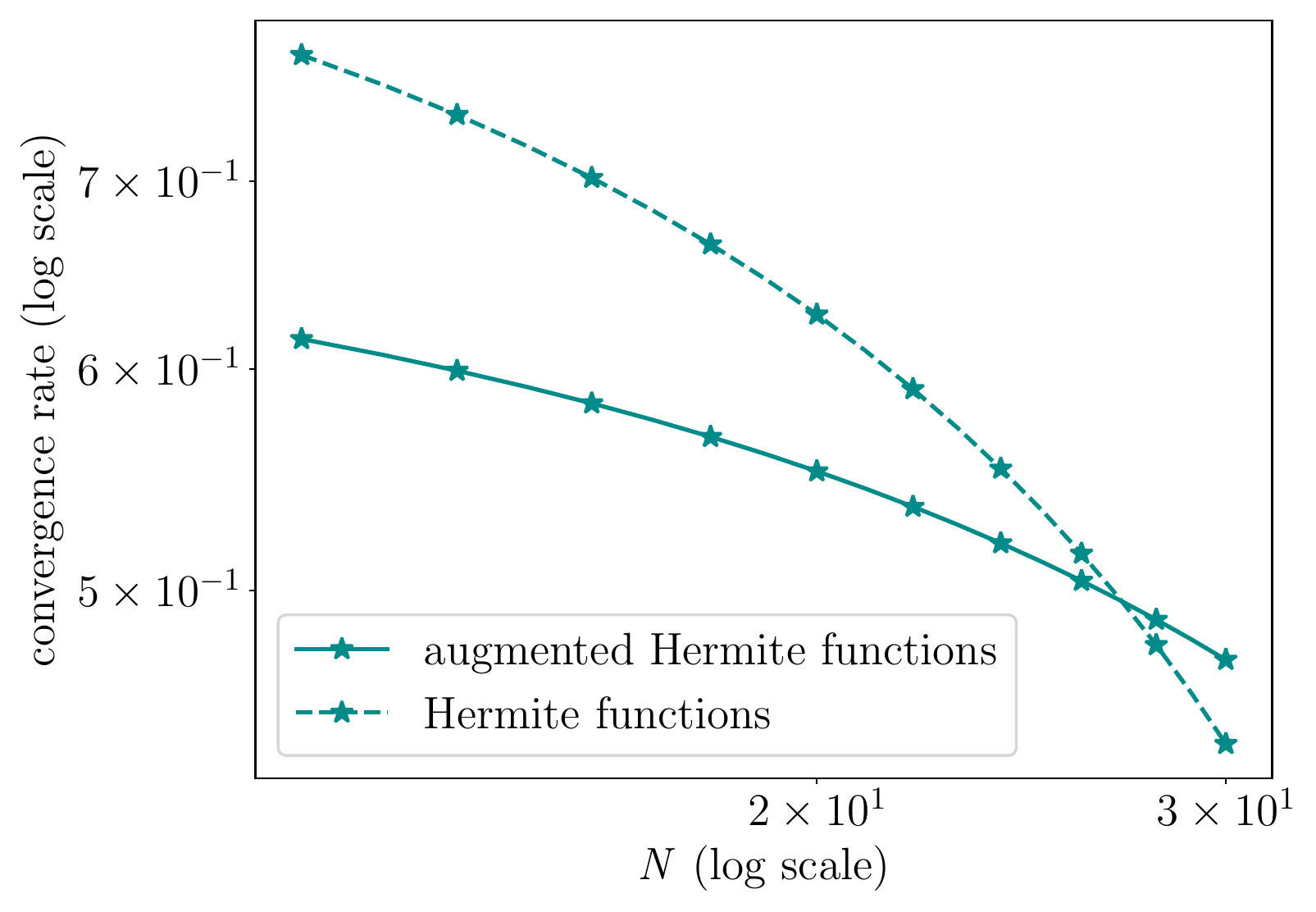}
   \caption{Convergence rate for the sum of the eigenvalues $\sum_{n=5}^{10}
   \tilde{E}_{n,N}$
   as a function of the basis truncation parameter $N$,
   computed using Hermite and augmented Hermite
   discretization schemes.
   To improve the contrast between the lines, every other marker is omitted.}
	\label{fig:convergence_rate}
\end{figure}

\section{Conclusions and Outlook}
\label{sec:conclusions}
Normalizing flows can be used to augment
the expressivity of standard basis sets of $L^2$. We show that augmented
basis sets define a rich approximation space consisting of a family of
parameterized linear spaces that are all dense in $L^2$.  We demonstrate the
concept's effectiveness by simulating the eigenfunctions of a perturbed quantum
harmonic oscillator using augmented Hermite functions. Our findings
indicate quick convergence of the numerical scheme with the size of the basis set.
Furthermore, the results were shown to be more accurate than those obtained by using
Hermite functions with the same basis size.

Despite the fact that numerical simulations have demonstrated the
convergence behavior
of augmented basis sets for simulating the eigenfunctions of Hamiltonian
operators as $N $ increases, theoretical results are still lacking.
Moreover, future work should focus on convergence for a fixed truncation
parameter $N$ by increasing the size of the normalizing flow.

This numerical scheme is yet to be used on other high-dimensional differential equations or on more
complicated quantum mechanical problems, \eg, simulations of dynamics and spectra of polyatomic
molecules, or quantum chemical calculations. In particular, the precision of standard basis sets for
approximating solutions of differential equations decreases exponentially as the number of
dimensions increase. While augmenting the basis sets with normalising flows reduces the size of the
linear expansion $N$ required to achieve a particular level of accuracy, it is unclear whether the
curse of dimensionality can be mitigated. Another issue in higher dimensions is the computation of
integrals. We used Gauss quadratures in this manuscript, which are not suitable for higher
dimensions because their size grows exponentially with the number of dimensions, though sparse grid
approaches, such as Smolyak grids, can mitigate the steep scaling for certain
problems~\cite{Avila:JCP139:134114}. Stochastic estimations of integrals, such as Monte-Carlo
methods, may provide a dimension-independent scaling at the expense of lower accuracy. Another
approach may be the use of collocation methods, which are equivalent to solving the Schrödinger
equation by demanding that it is satisfied at a set of points, \ie, no integration is
necessary~\cite{Yang:CPL153:98}.

We believe that augmented basis sets have the potential
for accurate modeling of excited states of quantum models with unbounded
potentials experiencing a dissociation behavior.

\section{Code Availability}
The computer code and all relevant data can be obtained from the online
repository at \href{https://github.com/CFEL-CMI/FlowBasis}{https://github.com/CFEL-CMI/FlowBasis}.

\section*{Supplementary Material}
\label{sec:supplementary}
The normalizing flow we used for training has the form
$$
    g^{-1}_\theta(x) =
    \tanh(f_ \gamma(\tanh^{-1}(x-\beta )/\alpha ))*\alpha+\beta,
$$
where $f$ is an invertible residual neural network, whose parameters are denoted by $\gamma$.
The neural network $f$ is composed of 1
layer with 128 hidden units, with Lipswish
activation functions, \ie, functions of the form
$$
    \sigma(x) = \frac{1}{1.1} \cdot \frac{x}{1+\exp(-x)}.
$$
Through a fixed scaling procedure, the input to the $\tanh^{-1}$ function is guaranteed to lie within $[-1,1]$.  The free parameters of
the normalizing flow $\theta=(\gamma, \alpha, \beta)$ are optimized using the Adam optimization algorithm in~\cite{Kingma:ICLR3:2015},
with a learning rate $\alpha=10^{-3}$. To compute the matrix elements of
$\tilde{H}$ in \eqref{eq:Schroedinger_weak}, we used Gauss-Hermite
quadratures of order $90$. This is possible and convenient since

\begin{equation*}
	\int \phi_i(g^{-1}(x)) V(x) \phi_j(g^{-1}(x)) |\det \nabla_x g^{-1}(x)| \ dx =
	\int \phi_i(x) V(g(x)) \phi_j(x)  \ dx,
\end{equation*}
\ie, matrix elements of the potential function in the augmented Hermite functions correspond to
matrix elements of the perturbed potential $V \circ g$ in the Hermite functions. While the integral
on the right hand side contains $V \circ g$, it is still possible to compute accurately with
Gauss-Hermite quadratures since the potential in our example is simple and $g$ is a smooth Lipschitz
function. Same argumentation can be perused for the kinetic energy operator. Note that the $n$th
Hermite functions have $n$ nodes, which means that high-order Hermite functions are harder to
integrate. Indeed, we observed that the calculated energies go below the 
variational limit when
using a larger basis set for a fixed order of quadrature points or when using a smaller order of
quadrature points for the same size of the basis. For the states considered in
\autoref{fig:convergence} we observed that using $90$ quadrature points was enough for obtaining
correct variational limits. Finally, note that the computation of quadrature 
points using numpy has only been tested up to degree
$100$. Higher degrees may be problematic, which restricted the size of
the basis set that we could consider in this study.

\section*{Acknowledgment}
   We thank Jannik Eggers and Vishnu Sanjay for useful comments and discussions in early stages of
   this work and Álvaro Fernández-Corral for useful comments on the manuscript.

   This work has been supported by Deutsches Elektronen-Synchtrotron DESY, a member of the Helmholtz
   Association (HGF), by the Data Science in Hamburg HELMHOLTZ Graduate School for the Structure of
   Matter (DASHH, HIDSS-0002), and by the Deutsche Forschungsgemeinschaft (DFG) through the cluster
   of excellence ``Advanced Imaging of Matter'' (AIM, EXC~2056, ID~390715994). We acknowledge the
   use of the Maxwell computational resources operated at Deutsches Elektronen-Synchrotron DESY,
   Hamburg, Germany.

\bibliography{string,cmi}
\end{document}